\newcommand{\subtitle}[1]{%
  \posttitle{%
    \par\end{center}
    \begin{center}\large#1\end{center}
    \vskip0.5em}%
}
\newlength{\fixboxwidth}     
\newcommand{\R} {{\mathbb R}}
\newcommand{\C} {{\mathbb C}}
\newcommand{\K} {{\mathbb K}}  
\newcommand{\N} {{\mathbb N}}
\newcommand{\Z} {{\mathbb Z}}
\newcommand{\W} {{\bf W }}
\newcommand{\tord} {{\mathbb \Torus^d}}	
\newcommand{\Torus}{{\mathbb T}}
\renewcommand{\P}{\mathbb P}
\DeclareMathOperator\expect{\mathbb{E}}
\newcommand{\Hilbert}{{\mathcal H}}
\newcommand{\euler}{{\textup e}}
\newcommand{\imag}{{\textup i}}
\renewcommand{\Re}{\operatorname{Re}}
\renewcommand{\Im}{\operatorname{Im}}
\newcommand{\embed}{\hookrightarrow}
\newcommand\Sphere{\mathbb{S}}
\DeclareMathOperator\card{card}
\newcommand{\wt}{\widetilde }      
\DeclareMathOperator\id{id}
\renewcommand{\epsilon}{\varepsilon}      
\newcommand{\ran}{{\rm ran }}     
\newcommand{\deter}{{\rm det }}
\newcommand{\lin}{{\rm lin}}
\newcommand{\nonlin}{{\rm nonlin}}
\newcommand{\dint}{\,\mathrm{d}} 
\newcommand\veck{\mathbf{k}}
\newcommand\vecs{\mathbf{s}}
\newcommand\vecx{\mathbf{x}}
\newcommand\vecy{\mathbf{y}}
\newcommand{\vecxi}{\boldsymbol{\xi}}				
\newcommand{\vecnu}{\boldsymbol{\nu}}				
\newcommand{\veczeta}{\boldsymbol{\zeta}}		
  \providecommand*{\toclevel@author}{999}
  \providecommand*{\toclevel@title}{0}
\theoremstyle{plain}       
\newtheorem{theorem}{Theorem}[section]      
\newtheorem{corollary}[theorem]{Corollary}      
\newtheorem{lemma}[theorem]{Lemma}     
\newtheorem{proposition}[theorem]{Proposition}      
\theoremstyle{definition}     
\newtheorem{definition}[theorem]{Definition}
\newtheorem*{notation*}{Notation}
\newtheorem{remark}[theorem]{Remark}     
\newtheorem{algorithm}[theorem]{Algorithm}
\numberwithin{equation}{section}   
\begin{document}     

\title{Monte Carlo Methods for Uniform Approximation on\\
Periodic Sobolev Spaces with Mixed Smoothness}


\author[a]{Glenn Byrenheid\thanks{E-mail: byrenheid.glenn@gmail.com}}
\author[b]{Robert J.\ Kunsch\thanks{E-mail: robert.kunsch@uni-osnabrueck.de}}
\author[c]{Van Kien Nguyen\thanks{E-mail: kiennv@utc.edu.vn \& kien.nv.hp@gmail.com}}
\affil[a]{Hausdorff-Center for Mathematics, 
	Endenicher Allee~62, 53115 Bonn, Germany}
\affil[b]{Universit\"at Osnabr\"uck, Institut f\"ur Mathematik,
	Albrechtstr.~28a, 49076 Osnabr\"uck, Germany}
\affil[c]{Department of Mathematics, University of Transport and Communications,
					No.3 Cau Giay Street, Lang Thuong Ward, Dong Da District,
					Hanoi, Vietnam}

\date{\today}
\maketitle
     

\begin{abstract}
	We consider the order of convergence for
	linear and nonlinear Monte Carlo approximation of compact embeddings
	from Sobolev spaces of dominating mixed smoothness
	defined on the torus~\mbox{$\tord$}
	into the space~\mbox{$L_{\infty}(\tord)$}
	via methods that use arbitrary linear information.
	These cases are interesting because
	we can gain a speedup
	of up to~$1/2$ in the main rate compared to the
	worst case approximation.
	In doing so we determine the rate for some cases that have been
	left open by Fang and Duan.
\end{abstract}

\noindent
\textit{Keywords:}\;
Monte Carlo approximation,
mixed periodic Sobolev spaces,
information-based complexity,
linear information,
order of convergence.

\section{Introduction}

Nowadays, Monte Carlo methods are widely used in many areas of applied mathematics.
Especially for the computation of integrals,
randomization will usually speed up the order of convergence
compared to deterministic methods.
It is well known that for certain function approximation problems
Monte Carlo helps in a similar way.
This is basically due to the fundamental work
of Math\'e 1991~\cite{Ma91} and Heinrich 1992~\cite{He92}.

Function spaces of dominating mixed smoothness were introduced
by S.M.~Nikol'skij in the early 1960s. 
Recently, there is an increasing interest in information-based complexity
and high-dimensional approximation in these spaces.
Function spaces of this type also play an important role in many real-world problems. 
For example, there exist a number of problems in finance and quantum chemistry
modelled on function spaces of dominating mixed smoothness. 
We refer to the monographs \cite{Glas-04,Yse-10}. 

Let $\tord:=[0,1)^d$ be the $d$-dimensional torus
and $\W_p^r(\tord)$ be the periodic Sobolev spaces
of dominating mixed smoothness~$r$ on $\tord$.
In this paper we study $L_\infty$-approximation of the class $\W_p^r(\tord)$
where we supplement the results of 
Fang and Duan~\cite{FD07,FD08} on $L_q$-approximation. 
Let us mention that the study of $L_\infty$-approximation
of function spaces with dominating mixed smoothness
is much harder compared to the case \mbox{$1 < q < \infty$}
and different techniques are needed,
see comments and open problems in \cite[Section~4.6]{DTU17}. 
Moreover, we hope that the way we present the algorithm here
will illuminate the nature of randomized approximation via linear information
towards a better understanding of general $L_q$-approximation as well.

Let $e^{\deter,\lin}(n,S)$ and $e^{\ran,\lin}(n,S)$
denote the minimal deterministic and randomized errors
for linear approximation of the operator~$S$
if we use $n$~deterministic or randomized information operations
from the class of all linear functionals,
respectively.
By using an estimate on the expected
norm of random trigonometric polynomials we can bound the order of convergence
for linear Monte Carlo approximation,
	\begin{equation*}
			\left(\frac{ (\log n)^{(d-1)}}{n}\right)^{r-(\frac{1}{p}-\frac{1}{2})_+}
				\preceq \, e^{\ran,\lin}\left(n,\W^r_p(\tord) \embed L_{\infty}(\tord)\right)
				\, \preceq \, \left(\frac{ (\log n)^{(d-1)}}{n}
														\right)^{r-(\frac{1}{p}-\frac{1}{2})_+}
															\sqrt{\log n}\,.
	\end{equation*}
Comparing our result with the already known result on
deterministic approximation,
\begin{equation*}
	e^{\deter,\lin}\left(n, \W^r_2(\tord) \embed L_{\infty}(\tord)\right)
		\,\asymp\, \frac{(\log n)^{r(d-1)}
								}{n^{r-1/2}} \,,
\end{equation*}
see Temlyakov~\cite{Tem93},
we observe that randomization improves the order of convergence
by a factor up to~$n^{1/2}$ when~$p = 2$.

The study of Monte Carlo methods for $L_\infty$-approximation
is particularly interesting
since in the deterministic setting
linear methods are optimal,
see~\cite[Theorem~4.5 and 4.8]{NW08}.
In the randomized setting the situation is different.
In combination with known results
on nonlinear deterministic methods for $L_2$-approximation
we shall show that the optimal Monte Carlo approximation rate
for spaces~\mbox{$\W_p^r(\tord)$} with~\mbox{$1<p<2$}
is better than what can be achieved with merely linear methods.
More precisely, we prove 
	\begin{equation*}
			\left(\frac{ (\log n)^{(d-1)}}{n}\right)^r
				\ \preceq\ e^{\ran,\nonlin}
											\left(n,\W^r_p(\tord)
																\embed L_{\infty}(\tord)
											\right)
				\ \preceq\ \left(\frac{ (\log n)^{(d-1)}}{n}
														\right)^r
															\sqrt{\log n} \,.
	\end{equation*}

The paper is organized as follows.
In the second section we shall recall some definitions
from information-based complexity
and give basic properties of error notions.
The next section is devoted to
a fundamental Monte Carlo function approximation method in an abstract setting,
which goes back to Math\'e 1991~\cite{Ma91}.
Our main results for spaces of dominating mixed smoothness
are discussed and proven in Section~\ref{sec:main}.

\begin{notation*}
	For a real number~$a$ we put $a_+ := \max\{a,0\}$.
	By $\lfloor a \rfloor$ we denote the integer part of~$a$.
	The notion $a_n \preceq b_n$ for sequences~$(a_n)_{n \in \N}$
	and $(b_n)_{n \in \N}$ means that
	there exists a constant $C>0$ such that $a_n \leq C \, b_n$
	for sufficiently large~$n$.
	The symbol~\mbox{$a_n \asymp b_n$} will be used as an abbreviation for
	\mbox{$a_n \preceq b_n \preceq a_n$}.
	Note that the implicit constants in this paper
	may depend on the dimension~$d$ and the parameters~$p$ and $r$.
	The cardinality of a finite set~$Q$ is denoted by~$|Q|$.
	For multi-indices~$\veck=(k_1,... , k_d) \in \Z^d$ we put
	\mbox{$|\veck|_1 := |k_1| + \ldots + |k_d|$}.
	We write~\mbox{$(\veck,\vecx) := k_1 x_1 + \ldots + k_d x_d$}
	for the standard scalar product
	with torus elements~\mbox{$\vecx \in \tord = [0,1)^d$}.
	The inner product~\mbox{$\langle \cdot, \cdot \rangle_{\Hilbert}$}
	within a Hilbert space~$\Hilbert$ shall be semilinear in the first argument
	in the complex setting.
	Finally, the embedding operator for Banach spaces~$F \subset G$
	will be denoted by~$F \embed G$.
\end{notation*}

\section{Types of errors and Monte Carlo approximation}

We will give a short introduction to the notions from \emph{information-based complexity}.
For more details on different error and algorithmic settings see,
for example, the books~\cite{NW08,TWW88}.

Let $S: F \to G$ be a bounded
linear operator between Banach spaces, 
the so-called \emph{solution operator}. We aim to approximate~$S$
for \emph{inputs}~$f \in F$ with respect to the norm of
the \emph{target space}~$G$.
The vector spaces involved may be real or complex, \mbox{$\K \in \{\R,\C\}$}.
(The field under consideration affects the class of admissible algorithms,
even if only real-valued functions are approximated.)

Let~$(\Omega,\Sigma,\mathbb{P})$ be a suitable probability space.
Further, let $\mathcal{B}(F)$ and $\mathcal{B}(G)$ denote the Borel
\mbox{$\sigma$-algebra} of $F$ and~$G$, respectively.
By \emph{randomized algorithms}, also called \emph{Monte Carlo algorithms},
we understand
\mbox{$\Sigma \otimes \mathcal{B}(F)
	-\mathcal{B}(G)$}-measurable
mappings
$A_n = (A_n^{\omega}(\cdot))_{\omega \in \Omega}:
	\Omega \times F \to G$.
This means that the output $A_n(f)$ for an input~$f$ is random, depending
on~\mbox{$\omega \in \Omega$}.
We consider algorithms of cardinality~$n$ that use at most
$n$~\emph{continuous linear functionals} as information,
i.e.~\mbox{$A_n^{\omega} = \phi^{\omega} \circ N^{\omega}$} where
$N^{\omega} : F \to \K^n$ is the so-called
\textit{information mapping}.
The mapping $\phi^{\omega}: \K^n \to G$ generates an output
$g = \phi^{\omega}(\vecy) \in G$ as
a compromise for all possible inputs that lead to
the same information~\mbox{$\vecy = N^{\omega}(f) \in \K^n$}.
In this paper we only consider \emph{non-adaptive} information mappings of the shape
\begin{equation}
	N^{\omega}(f)
	\,=\, [L_1^{\omega}(f),\ldots,L_n^{\omega}(f)]
	\,=\, (y_1,\ldots,y_n)
	\,=\, \vecy \,,
\end{equation}
where all functionals~$L_k^{\omega}$ are chosen at once.
This is equivalent to that $N^{\omega}$~is a linear mapping for any
fixed random element~\mbox{$\omega \in \Omega$}.
If the mapping~$\phi^{\omega}$ is linear as well,
the Monte Carlo method~$A_n$ is called \emph{linear} itself.
By $\mathcal{A}_n^{\ran,\lin}$ we denote the class of all
linear Monte Carlo algorithms that use $n$~pieces information,
the broader class of nonlinear algorithms is
denoted~$\mathcal{A}_n^{\ran,\nonlin}$.
We regard the class of \emph{deterministic algorithms} as a
subclass~$\mathcal{A}_n^{\rm det,\star} \subset \mathcal{A}_n^{\rm ran,\star}$
($\star \in \{\lin,\nonlin\}$)
of algorithms that are independent from~\mbox{$\omega \in \Omega$}
(this means in particular that we assume deterministic algorithms to be measurable),
for a particular algorithm we write \mbox{$A_n = \phi \circ N$}, omitting~\mbox{$\omega$}.

For a deterministic algorithm~$A_n$ the (absolute) \emph{error~at~$f$} is
defined as the distance between output and exact solution,
\begin{equation}
	e(A_n,f) \,:=\, \|S f - A_n(f)\|_G \,.
\end{equation}
For randomized algorithms~$A_n = (A_n^{\omega}(\cdot))$ this
can be generalized as the \emph{expected error at~$f$},
\begin{equation}
	e(A_n,f) \,:=\, \expect \|S f - A_n^{\omega}(f)\|_G \,.
\end{equation}
(The expectation~$\expect$ is written for the integration over
all~$\omega \in \Omega$ with respect to~$\P$.)

The \emph{global error} of an algorithm~$A_n$ is defined as the error for the
worst input from the unit ball of~$F$, we write
\begin{equation} \label{eq:errglobal}
	e(A_n,S) \,:=\, \sup_{\|f\|_F \leq 1} e(A_n,f) \,.
\end{equation}
We define the \emph{$n$-th minimal error} of a problem
as the error of optimal algorithms,
\begin{equation*}
	e^{\diamond,\star}(n,S)
		\,:=\, \inf_{A_n \in \mathcal{A}^{\diamond,\star}} e(A_n,S) \,,
\end{equation*}
where $\diamond \in \{\deter,\ran\}$
and $\star \in \{\lin,\nonlin\}$.
These quantities are inherent properties of the problem~$S$, so
$e^{\ran,\star}(n,S)$ is called the \emph{Monte Carlo error},
$e^{\det,\star}(n,S)$ the \emph{worst case error}
of the problem~$S$.

Obviously, these error quantities are decreasing (or steady)
for growing~$n$.
By definition~\eqref{eq:errglobal},
restricting the set of inputs will diminish the error.
Similarly, a weaker norm for the target space has the same effect.
Due to homogeneity, equivalent norms will still give the same speed of convergence.
In general, broader classes of algorithms can only lead to a larger error,
so, since randomization and nonlinearity are additional features for algorithms,
we have
\begin{equation}
	e^{\ran,\star}(n,S) \,\leq\, e^{\deter,\star}(n,S)
		\quad \text{and} \quad
	e^{\diamond,\nonlin}(n,S) \,\leq\, e^{\diamond, \lin}(n,S)\,.
\end{equation}

\begin{remark}[Adaption and varying cardinality]
	All known algorithms used for linear function approximation problems
	(as we consider them in this paper)
	are non-adaptive and with fixed cardinality~$n$.
	Lower bounds, however, should also hold for more general algorithmic settings.
	This could be \emph{adaptive} information $N^{\omega}$, where the choice of the
	functionals may depend on previously obtained information.
	We could also consider algorithms~$A$ with \emph{varying cardinality},
	where the number~$n = n(f,\omega)$ of information operations
	may be randomized and adaptively depend on the input,
	the cardinality of~$A$ is then defined by the average cost,
	\mbox{$\card(A) := \sup_{f} \expect n(f,\omega)$}.
	
	Linear methods are always nonadaptive,
	hence varying cardinality means~\mbox{$n = n(\omega)$}.
	In this case,
	the error~\mbox{$\wt{e}^{\ran,\lin}(n)$} for algorithms with varying cardinality,
	and~\mbox{$e^{\ran,\lin}(n)$} for fixed cardinality, are closely related,
	\begin{equation*}
		{\textstyle \frac{1}{2}} \, e^{\ran,\lin}(2n)
			\,\leq\, \wt{e}^{\ran,\lin}(n)
			\,\leq\, e^{\ran,\lin}(n) \,,
	\end{equation*}
	see Heinrich~\cite[p.~289/290]{He92}.
	Consequently, varying cardinality does not affect the rate of approximation.
	
	For the nonlinear setting,
	the lower bounds in Fang and Duan~\cite{FD07} and the present paper
	rely on a technique due to Heinrich~\cite{He92},
	which is based on norm expectations for Gaussian measures.
	These lower bounds hold for adaptive algorithms as well.
	In Heinrich's original paper non-adaptively varying cardinality~\mbox{$n = n(\omega)$}
	is taken into account,
	in~\cite[Section~2.3.1]{Ku17} it is shown how lower bounds of this type
	extend to adaptively varying cardinality~\mbox{$n = n(\omega,f)$}.
\end{remark}

\section{A fundamental Monte Carlo function approximation method}

The following Proposition originates from Math{\'e} 1991~\cite[Lemma~5]{Ma91}
and is a key component for the Monte Carlo approximation
of Hilbert space functions.
Here we keep it a little more general than in the original paper,
where the target space~$G$ was restricted to sequence spaces~$\ell_q^m$,
\mbox{$2 < q \leq \infty$}.
The proof is included for completeness.

\begin{proposition}\label{prop:Ma91_l2G}
	Let $S:\ell_2^m = \R^m \to G$ be a linear operator.
	For~\mbox{$n < m$} let the information mapping~$N$
	be a random \mbox{$(n \times m$)}-Matrix with entries
	$N(i,j) = \frac{1}{\sqrt{n}} \, \xi_{ij}$,
	where the $\xi_{ij}$ are
	independent standard Gaussian random variables.
	Then \mbox{$A_n := S \, N^{\top} N$}
	defines a linear rank-$n$ Monte Carlo method and its error
	is bounded from above by
	\begin{equation*}
		e(A_n,S: \ell_2^m = \R^m \to G)
			\,\leq\, \frac{2}{\sqrt{n}} \, \expect \|S \vecxi\|_G
	\end{equation*}
	for $n<m$ where $\vecxi$ is a standard Gaussian vector in $\R^m$.
\end{proposition}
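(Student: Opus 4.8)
The plan is to exploit that $A_n$ is an \emph{unbiased} estimator of $Sf$ built from $n$ independent samples, and then to control the fluctuation by a symmetrization tailored to the quadratic Gaussian structure. First I would rewrite the method in sample form. Writing the $i$-th row of $N$ as $\frac{1}{\sqrt n}g_i^\top$ with $g_i=(\xi_{i1},\dots,\xi_{im})^\top$ i.i.d.\ standard Gaussian vectors in $\R^m$, one has $N^\top N=\frac1n\sum_{i=1}^n g_ig_i^\top$ and hence
\[
A_n^\omega(f)=S N^\top N f=\frac1n\sum_{i=1}^n \langle g_i,f\rangle\,Sg_i .
\]
Since $\expect[g_ig_i^\top]=I_m$, each summand $Y_i:=\langle g_i,f\rangle Sg_i = S(g_ig_i^\top f)$ satisfies $\expect Y_i=Sf$, so the method is unbiased. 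By homogeneity of $S$ and of the norm it suffices to treat $\|f\|_2=1$, and the task reduces to bounding $e(A_n,f)=\frac1n\expect\bigl\|\sum_{i=1}^n(Y_i-Sf)\bigr\|_G$.

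Next comes a ghost-sample symmetrization. Introducing an independent copy $g_i'$ of $g_i$ and $Y_i':=\langle g_i',f\rangle Sg_i'$, I would use $Sf=\expect Y_i'$ together with Jensen's inequality to pull the extra expectation outside the norm, giving
\[
e(A_n,f)\le\frac1n\,\expect\Bigl\|\sum_{i=1}^n (Y_i-Y_i')\Bigr\|_G .
\]
The crucial and least routine step is that a \emph{naive} Rademacher symmetrization is useless here: because $Y_i$ depends \emph{quadratically} on $g_i$, a random sign placed in front of $g_i$ is absorbed and conveys no information. Instead I would rotate each independent pair, setting $u_i:=\tfrac1{\sqrt2}(g_i-g_i')$ and $w_i:=\tfrac1{\sqrt2}(g_i+g_i')$, which are again i.i.d.\ standard Gaussian. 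Expanding the two quadratic terms $\frac12\langle w_i\pm u_i,f\rangle S(w_i\pm u_i)$ then \emph{linearizes} the difference,
\[
Y_i-Y_i'=\langle w_i,f\rangle\,Su_i+\langle u_i,f\rangle\,Sw_i .
\]
Since the joint law of $\{(u_i,w_i)\}_i$ is invariant under swapping $u_i\leftrightarrow w_i$, the two resulting sums have the same distribution, and the triangle inequality yields $\expect\|\sum_i(Y_i-Y_i')\|_G\le 2\,\expect\|\sum_i\langle w_i,f\rangle Su_i\|_G$.

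Finally I would condition on $w=(w_1,\dots,w_n)$. With the scalars $a_i:=\langle w_i,f\rangle$ then fixed, $\sum_i a_i\,Su_i=S\bigl(\sum_i a_i u_i\bigr)$, and $\sum_i a_i u_i$ is a centered Gaussian vector with covariance $\|a\|_2^2\,I_m$, hence equal in law to $\|a\|_2\,\vecxi$. Therefore the conditional expectation equals $\|a\|_2\,\expect\|S\vecxi\|_G$, while Cauchy--Schwarz gives $\expect\|a\|_2\le(\sum_i\expect\langle w_i,f\rangle^2)^{1/2}=\sqrt n$ because $\langle w_i,f\rangle\sim\mathcal N(0,1)$ for $\|f\|_2=1$. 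Collecting the three factors $\tfrac1n$, $2$ and $\sqrt n\,\expect\|S\vecxi\|_G$ produces exactly $e(A_n,f)\le\frac2{\sqrt n}\expect\|S\vecxi\|_G$, and taking the supremum over $\|f\|_2\le1$ finishes the argument. The rotation identity linearizing $Y_i-Y_i'$ is the only genuinely delicate point; the remaining ingredients are unbiasedness, Jensen's inequality, and a conditional Gaussian computation.
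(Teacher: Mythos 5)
Your proof is correct and is essentially the paper's own argument written row-wise instead of in matrix form: your ghost sample $g_i'$ is the paper's independent copy $M$ of $N$, your rotation $(u_i,w_i)=\bigl(\tfrac{1}{\sqrt2}(g_i-g_i'),\,\tfrac{1}{\sqrt2}(g_i+g_i')\bigr)$ is exactly the paper's passage to $\tfrac{1}{\sqrt2}(M\pm N)$, and your conditional Gaussian step with $\expect\|a\|_2\le\sqrt{n}$ matches the paper's conditioning via $\veczeta$ and $\expect\|\veczeta\|_2\le\sqrt{n}$. The only cosmetic difference is that the paper inserts the mean-zero cross terms $-M^{\top}N+N^{\top}M$ so the quadratic difference factors as the single bilinear form $2\bigl(\tfrac{M+N}{\sqrt2}\bigr)^{\top}\bigl(\tfrac{M-N}{\sqrt2}\bigr)$, whereas you keep the pure ghost difference, linearize it into two exchangeable bilinear terms, and recover the factor $2$ from the triangle inequality.
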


\begin{proof}
	Note that~$A_n$ is an unbiased linear Monte Carlo algorithm.
	To see this, take an input vector \mbox{$\vecx = (x_1,\ldots,x_m) \in \R^m$},
	then
	\begin{equation*}
		\expect (N^{\top} N \vecx)(i)
			\,=\, \frac{1}{n} \sum_{j=1}^n \sum_{k=1}^m
					\underbrace{\expect \xi_{ji} \xi_{jk}}_{= \delta_{ik}} \, x_k
			\,=\, x_i \,,
	\end{equation*}
	that is $\expect N^{\top} N \vecx = \vecx$,
	and by linearity of~$S$ we have $\expect A_n \vecx = S \vecx$.

	We start from the definition of the error for an input~$\vecx \in \ell_2^m$,
	\begin{align}
		e(A_n,\vecx)
			&\,=\, \expect\|S \vecx - S N^{\top} N \vecx\|_G \,.
			\nonumber
		\intertext{%
	Now, let~$M$~be an independent copy of~$N$.
	We write~$\expect^{\prime}$ for expectations with respect to $M$,
	and $\expect$ with respect to $N$.
	Using $\expect^{\prime} M^{\top}M = \id_{\R^m}$ and $\expect^{\prime} M = 0$,
	we can write} 
			&\,=\, \expect \| \expect^{\prime} S(M^{\top}M - M^{\top}N
																		+ N^{\top} M - N^{\top} N) \, \vecx \|_G
			\nonumber\\
			& 
	 \stackrel{\text{$\Delta$-ineq.}}{\leq}
			2 \expect \expect^{\prime}
									\bigg\| S \Big(\frac{M+N}{\sqrt{2}}\Big)^{\top}
													\Big(\frac{M-N}{\sqrt{2}}\Big) \, \vecx \bigg\|_G \,.
		\label{eq:tri}
		\intertext{%
	The distribution of $(M,N)$ is identical to that
	of $\left(\frac{M+N}{\sqrt{2}},\frac{M-N}{\sqrt{2}}\right)$,
	therefore we rewrite the above term as}
			&\,=\, 2 \expect \expect^{\prime} \|S N^{\top} M \vecx \|_G.
		\nonumber
		\intertext{%
	Here, $M \vecx$ is a Gaussian vector distributed
	like~$\frac{\|\vecx\|_2}{\sqrt{n}} \, \veczeta$ with $\veczeta$~being a
	standard Gaussian vector on $\R^n$.
	So we continue,
	$\expect^{\prime}$ now denoting the expectation with respect to~$\veczeta$,}
			&\,=\, \frac{2 \|\vecx\|_2}{\sqrt{n}} \,
					\expect \expect^{\prime} \|S N^{\top} \veczeta \|_G.
			\nonumber
		\intertext{%
	For fixed $\veczeta$, the distribution of $N^{\top} \veczeta$ is identical to that of
	$\frac{\|\veczeta\|_2}{\sqrt{n}} \, \vecxi$
	where $\vecxi$~is a standard Gaussian vector on $\R^m$,
	we write $\expect$ for the expectation with respect to~$\vecxi$.
	By Fubnini's theorem we get}
			&\,=\, \frac{2 \|\vecx\|_2}{n}
					\, \expect^{\prime} (\|\veczeta\|_2 \, \expect \|S \vecxi\|_G)\,.
		\nonumber
		\intertext{%
	Using
	\mbox{$\expect^{\prime} \|\veczeta\|_2
						\leq \sqrt{\expect^{\prime} \|\veczeta\|_2^2}
						= \sqrt{n}$},
	we finally obtain}
		e(A_n,x) &\,\leq\, \frac{2 \|\vecx\|_2}{\sqrt{n}} \, \expect \|S \vecxi\|_G \,.
		\nonumber
	\end{align}
	The proof is complete.
\end{proof}

We will need a complex version of the above result.

\begin{corollary}\label{cor:Ma91_l2G}
	Let $S:\ell_2^m = \C^m \to G$ be a $\C$-linear operator.
	Then the method from Proposition~\ref{prop:Ma91_l2G} provides the upper bound
	\begin{equation*}
		e^{\ran}(n,S: \ell_2^m = \C^m \to G)
			\,\leq\, \frac{2 \sqrt{2}}{\sqrt{n}} \, \expect \|S \vecxi\|_G
	\end{equation*}
	for $n<m$, where $\vecxi$ is a standard Gaussian vector in $\R^m$.
\end{corollary}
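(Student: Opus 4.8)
The plan is to reduce the complex statement to the real Proposition~\ref{prop:Ma91_l2G} by realification. I identify $\C^m$ with $\R^{2m}$ through $\vecx = \vecxi_1 + \imag \vecxi_2 \mapsto (\vecxi_1,\vecxi_2)$; this is an isometry for the $\ell_2$-norms, and the $\C$-linear operator $S$ becomes an $\R$-linear operator $\widetilde S \colon \R^{2m} \to G$ once we regard $G$ as a real Banach space with the same norm. Since one complex linear functional carries exactly the information of its real and imaginary parts, i.e.\ of two real functionals, an algorithm using $n$ complex information operations corresponds to one using $2n$ real ones. I would therefore apply Proposition~\ref{prop:Ma91_l2G} to $\widetilde S$ with a random $(2n \times 2m)$ Gaussian information matrix $N$; this is admissible precisely because the hypothesis $n < m$ guarantees $2n < 2m$.

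This step yields a linear rank-$2n$ Monte Carlo method $A_{2n} = \widetilde S\, N^{\top} N$ (which realifies back to a $\C$-linear method of cardinality~$n$) together with the error bound $e(A_{2n},\widetilde S) \leq \frac{2}{\sqrt{2n}} \, \expect\|\widetilde S\,\widetilde{\vecxi}\|_G$, where $\widetilde{\vecxi} = (\vecxi_1,\vecxi_2)$ is a standard Gaussian vector on $\R^{2m}$ with $\vecxi_1,\vecxi_2$ independent standard Gaussian on $\R^m$.

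The key remaining step is to bound the expected norm back in terms of a single real Gaussian vector. By $\C$-linearity I rewrite $\widetilde S\,\widetilde{\vecxi} = S(\vecxi_1 + \imag \vecxi_2) = S\vecxi_1 + \imag\, S\vecxi_2$, and then estimate by the triangle inequality. Using that multiplication by~$\imag$ is an isometry of the complex space~$G$, so that $\|\imag\, S\vecxi_2\|_G = \|S\vecxi_2\|_G$, and that $\vecxi_1$ and $\vecxi_2$ are each standard Gaussian on~$\R^m$, I obtain $\expect\|\widetilde S\,\widetilde{\vecxi}\|_G \leq \expect\|S\vecxi_1\|_G + \expect\|S\vecxi_2\|_G = 2\,\expect\|S\vecxi\|_G$. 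Substituting this into the previous bound gives $\frac{2}{\sqrt{2n}} \cdot 2\,\expect\|S\vecxi\|_G = \frac{2\sqrt{2}}{\sqrt{n}}\,\expect\|S\vecxi\|_G$, which is the asserted inequality.

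The main obstacle I expect is the bookkeeping that certifies the realified Gaussian method as a legitimate complex linear Monte Carlo algorithm of cardinality~$n$: one must confirm that the correspondence ``one complex functional $=$ two real functionals'' is faithful, or equivalently that the distributional identities used in the proof of Proposition~\ref{prop:Ma91_l2G} (unbiasedness via $\expect N^{\top}N = \id$ and the rotation-invariance arguments) survive the complex/real identification. The only genuinely lossy manipulation is the triangle inequality, which replaces the complex Gaussian norm $\expect\|S(\vecxi_1 + \imag \vecxi_2)\|_G$ by $2\,\expect\|S\vecxi\|_G$; this is exactly where the additional factor~$\sqrt{2}$ relative to the real case arises.
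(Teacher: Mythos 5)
Your reduction has a genuine gap at precisely the point you flagged: the cardinality bookkeeping. The correspondence ``one complex functional $=$ two real functionals'' is faithful in one direction only. A single $\C$-linear functional~$L$ indeed yields the two $\R$-linear functionals $\Re L$ and $\Im L$, but these are \emph{constrained}, namely $\Im L(\vecx) = -\Re L(\imag \vecx)$. Conversely, an arbitrary pair of $\R$-linear functionals on~$\C^m$ --- in particular two independent Gaussian rows of your $(2n \times 2m)$-matrix~$N$ --- almost surely cannot be simulated by a single $\C$-linear measurement; each real row~$\ell$ costs its own complex functional, via $\ell = \Re L$ with $L(\vecx) = \ell(\vecx) - \imag\, \ell(\imag\vecx)$. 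Consequently your realified method $A_{2n} = \widetilde{S}\, N^{\top} N$ has complex information cost~$2n$, not~$n$, and it does not ``realify back to a $\C$-linear method'': since the unstructured real matrix $N^{\top}N$ on $\R^{2m}$ generically fails to commute with the multiplication-by-$\imag$ structure, $A_{2n}$ is only $\R$-linear on~$\C^m$. What your argument actually establishes is
\begin{equation*}
	e^{\ran}(2n, S:\C^m \to G) \,\leq\, \frac{2\sqrt{2}}{\sqrt{n}}\, \expect \|S\vecxi\|_G \,,
\end{equation*}
i.e.\ the correct rate but with doubled cardinality (so a worse constant at a fixed budget), and via a method that is not linear over~$\C$ --- the latter matters downstream, because the corollary is used to bound $e^{\ran,\lin}$ through Algorithm~\ref{alg:MClin} and Proposition~\ref{prop:W2Linf}. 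It also does not prove the statement as formulated, which asserts the bound for ``the method from Proposition~\ref{prop:Ma91_l2G}'' itself.

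The paper's proof avoids realification entirely: it applies the \emph{same} $n \times m$ real Gaussian matrix to complex inputs. A real row vector is a legitimate $\C$-linear functional on~$\C^m$ (returning complex values), and $N^{\top}N$, being a real matrix, commutes with complex scalars, so $A_n = S\, N^{\top} N$ is a $\C$-linear Monte Carlo method of cardinality exactly~$n$. One then splits the \emph{input} rather than the Gaussian vector, $\vecx = \Re\vecx + \imag \Im\vecx$, applies the triangle inequality and the real error bound to each part, and uses $\|\Re\vecx\|_2 + \|\Im\vecx\|_2 \leq \sqrt{2}\,\|\vecx\|_2$ to produce the factor~$\sqrt{2}$. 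Your final step (splitting $\widetilde{\vecxi}$ and using that multiplication by~$\imag$ is an isometry of~$G$) is correct as far as it goes; it is the reduction preceding it that fails. A salvageable variant of your route would use a \emph{complex} Gaussian matrix, whose realification is the structured matrix pairing rows correctly, but that requires reproving Proposition~\ref{prop:Ma91_l2G} for complex Gaussians rather than citing it.
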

\begin{proof}
	Applying the algorithm $A_n$ from Proposition~\ref{prop:Ma91_l2G}
	to a vector~\mbox{$\vecx \in \C^m$},
	the error can be estimated by splitting the input~\mbox{$\vecx$} into
	its real and imaginary part~$\vecx = \Re \vecx + \imag \Im \vecx$.
	Using the triangle inequality we obtain
	\begin{align*}
		e(A_n,\vecx)
			&\leq e(A_n,\Re \vecx) + e(A_n, \Im \vecx) \\
			&\leq (\|\Re \vecx\|_2 + \|\Im \vecx\|_2) \, e(A_n,S:\R^m \to G) \\
			&\leq \sqrt{2} \, \|\vecx\|_2 \, e(A_n,S:\R^m \to G) \,.
	\end{align*}
	This gives the additional factor~$\sqrt{2}$ for the complex version.
\end{proof}

\begin{remark}[Basis representation]\label{rem:Ma91_basis}
	We will apply Proposition~\ref{prop:Ma91_l2G}
	or Corollary~\ref{cor:Ma91_l2G}, respectively,
	in a formally different situation.
	Suppose \mbox{$S: \Hilbert \to G$} is a linear rank~$m$ operator
	from a Hilbert space~$\Hilbert$ into a Banach space~$G$,
	and let~\mbox{$(\psi_j)_{j=1}^m$} be an orthonormal system in~$\Hilbert$
	such that
	\begin{equation*}
		S f \,=\, \sum_{j=1}^m \langle \psi_j, f \rangle_{\Hilbert} \, S \psi_j \,.
	\end{equation*}
	We use the randomized rank-$n$ method
	\begin{equation*}
		A_n^{\omega}(f)
			\,=\, \frac{1}{n} \sum_{i = 1}^n L_i^{\omega}(f) \, g_i^{\omega} \,,
	\end{equation*}
	where we choose random functionals~$L_i^{\omega}$,
	and corresponding functions~$g_i^{\omega}$ from the output space,
	\begin{equation*}
		L_i^{\omega}(f)
			\,:=\, \sum_{j=1}^m \xi_{ij} \, \langle \psi_j, f \rangle_{\Hilbert}\,,
		\qquad
		g_i^{\omega}
			\,:=\, \sum_{j=1}^m \xi_{ij} \, S \psi_j \,,
	\end{equation*}
	here the $\xi_{ij}$ are i.i.d.\ standard Gaussian random variables.
	According to the above results we have the error estimate
	\begin{equation*}
		e((A_n^{\omega})_{\omega},S)
			\,\leq\, C\, \frac{\expect \bigl\|\sum_{j=1}^m \xi_j \, S\psi_j \bigr\|_G
												}{\sqrt{n}} \,,
	\end{equation*}
	where~$\xi_j$ are standard real Gaussian random variables,
	and \mbox{$C = 2$} or \mbox{$2\sqrt{2}$}, respectively.
	Note that in the complex setting~$\K = \C$
	this estimate depends on the particular orthonormal system
	that we have chosen.
	Further, for function spaces~$\Hilbert$
	where we can canonically identify the real and the imaginary part,
	methods for the complex-numbered setting~\mbox{$\K = \C$}
	are not necessarily admissible for the real-numbered setting~\mbox{$\K = \R$}.
	For the situation we will consider, however,
	this is not a big problem, see Remark~\ref{rem:realAlg}.
\end{remark}

For some problems nonlinear Monte Carlo methods give better approximations.
The following abstract result is due to Heinrich 1992~\cite[Proposition~3]{He92}.

\begin{proposition} \label{prop:MCnonlin}
	Let $S: F \to H$ and $T: H \to G$ be bounded linear operators
	between Banach spaces. Then for $m,n \in \N_0$ we have
	\begin{equation*}
		e^{\ran,\nonlin}(m+n, T \, S)
			\,\leq\, e^{\det,\nonlin}(m,S) \, e^{\ran,\lin}(n,T) \,.
	\end{equation*}
\end{proposition}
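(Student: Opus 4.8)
The plan is to decompose the composite operator $T\,S$ into a two-stage approximation: first approximate the inner operator $S$ deterministically using $m$ pieces of information, then apply a randomized linear method for the outer operator $T$ using the remaining $n$ pieces. The key observation is that errors multiply when we chain an approximation of the input with an approximation of the map applied to the (approximated) input. Let me set up the algorithm explicitly. Choose a near-optimal deterministic nonlinear method $A_m^S = \phi_S \circ N_S$ for $S: F \to H$ with global error close to $e^{\det,\nonlin}(m,S)$, and a near-optimal linear Monte Carlo method $A_n^{T,\omega} = \phi_T^\omega \circ N_T^\omega$ for $T: H \to G$ with expected global error close to $e^{\ran,\lin}(n,T)$. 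The natural composite candidate is
\begin{equation*}
	A_{m+n}^\omega(f) \,:=\, A_n^{T,\omega}\bigl(A_m^S(f)\bigr) \,,
\end{equation*}
which uses $m$ information operations to produce $A_m^S(f) \in H$ and then $n$ further operations (evaluated on that intermediate element) to approximate $T$ applied to it.

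First I would verify that $A_{m+n}$ is an admissible randomized nonlinear method of cardinality $m+n$ using linear information on $F$. The $m$ functionals from $N_S$ are linear functionals on $F$ and are deterministic. The $n$ functionals from $N_T^\omega$ are linear functionals on $H$; composing each with the (generally nonlinear) reconstruction $A_m^S$ would in principle not give linear functionals on $F$, but this is acceptable because the outer class is $\mathcal{A}_{m+n}^{\ran,\nonlin}$, which permits nonlinear dependence on the information. The more careful point is that $A_n^{T,\omega}$ needs its information evaluated on the \emph{reconstructed} element $A_m^S(f)$, which is already computed from the first $m$ functionals, so no new information about $f$ beyond these $m+n$ evaluations is required. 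Hence $\card(A_{m+n}) \le m+n$.

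The error estimate is then the heart of the matter. I would split using the triangle inequality and exploit that $T$ is a bounded linear operator commuting appropriately:
\begin{align*}
	\|T S f - A_n^{T,\omega}(A_m^S f)\|_G
		&\,\leq\, \|T S f - T (A_m^S f)\|_G
				+ \|T(A_m^S f) - A_n^{T,\omega}(A_m^S f)\|_G \\
		&\,=\, \|T(Sf - A_m^S f)\|_G + \|(T - A_n^{T,\omega})(A_m^S f)\|_G \,.
\end{align*}
For the first term I would bound $\|T(Sf - A_m^S f)\|_G \le \|T\| \, \|Sf - A_m^S f\|_H$; but to get the clean product with \emph{no} extra $\|T\|$ factor, the correct move is to absorb the outer error into the randomized method itself and instead apply the linear Monte Carlo bound for $T$ to the intermediate vector. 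Taking expectation over $\omega$ and using the global (unit-ball) error of $A_n^{T}$ scaled by $\|A_m^S f\|_H$ handles the second term, while the first term is bounded by the deterministic error of $A_m^S$ composed with $T$. The subtle accounting that makes the bound come out as a \emph{product} without stray norms is the main obstacle: one must argue that $\|Sf - A_m^S f\|_H \le e^{\det,\nonlin}(m,S)$ on the unit ball and that the residual $Sf - A_m^S f$, after applying $T$ and approximating, telescopes correctly so that the randomized error acts on $T$ restricted to the range, yielding $e^{\det,\nonlin}(m,S)\cdot e^{\ran,\lin}(n,T)$ after taking the supremum over $\|f\|_F \le 1$ and then the infimum over the two constituent algorithms.
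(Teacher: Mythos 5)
Your composite algorithm is the wrong one, and the vagueness in your final paragraph sits exactly where the proof breaks. With $A_{m+n}^\omega(f) := A_n^{T,\omega}\bigl(A_m^S(f)\bigr)$, your own triangle-inequality split gives, after taking expectations and using homogeneity of the linear method $A_n^{T,\omega}$,
\begin{equation*}
	\expect\bigl\|TSf - A_n^{T,\omega}(A_m^S f)\bigr\|_G
		\,\leq\, \|T\|\,\bigl\|Sf - A_m^S f\bigr\|_H
			+ \bigl\|A_m^S f\bigr\|_H \, e\bigl(A_n^{T},T\bigr)\,,
\end{equation*}
and here $\|A_m^S f\|_H$ is of order $\|S\| + e^{\det,\nonlin}(m,S)$, not small. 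So this algorithm only yields a \emph{sum} $\|T\|\,e^{\det,\nonlin}(m,S) + \bigl(\|S\|+\cdots\bigr)\,e^{\ran,\lin}(n,T)$, which in the intended applications (both factors tending to zero) is far larger than the claimed product. No bookkeeping can rescue this particular algorithm: since $A_n^{T,\omega}$ is evaluated at $A_m^S f$, its information $N_T^\omega(A_m^S f)$ is computable from the $m$ values $N_S(f)$ alone, so your method acquires no information about $f$ beyond $N_S(f)$ and the term $\|T(Sf - A_m^S f)\|_G$ is irreducible within it. You sense the problem (``absorb the outer error\dots'', ``the residual \dots telescopes correctly'') but never change the algorithm, so the product bound is asserted rather than proved.

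The missing idea --- and the construction the paper itself sketches directly after the proposition, namely $g := h + A_n^{\omega}(f-h)$ in the embedding setting --- is to apply the randomized method to the \emph{residual}. Take $B_m$ deterministic nonlinear with error at most $e^{\det,\nonlin}(m,S)+\epsilon$ for $S$, take $A_n^\omega = \phi^\omega \circ N^\omega$ linear Monte Carlo with error at most $e^{\ran,\lin}(n,T)+\epsilon$ for $T$, and set
\begin{equation*}
	A^\omega(f) \,:=\, T\bigl(B_m(f)\bigr) + A_n^\omega\bigl(Sf - B_m(f)\bigr)\,.
\end{equation*}
This is admissible with cardinality $m+n$: the $n$ new functionals are the $L_i \circ S$, which are bounded linear functionals on $F$, and by linearity of $N^\omega$ one has $N^\omega(Sf - B_m f) = N^\omega(Sf) - N^\omega(B_m f)$, where the subtrahend (like $T B_m(f)$) is computable from the already known element $B_m(f)$ at no informational cost. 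The error then collapses to a single term,
\begin{equation*}
	\expect\bigl\|TSf - A^\omega(f)\bigr\|_G
		\,=\, \expect\bigl\|(T - A_n^\omega)\bigl(Sf - B_m f\bigr)\bigr\|_G
		\,\leq\, \bigl\|Sf - B_m f\bigr\|_H \, \bigl(e^{\ran,\lin}(n,T)+\epsilon\bigr)\,,
\end{equation*}
where the inequality uses that $T - A_n^\omega$ is linear for each fixed $\omega$, hence its expected error at $h$ scales as $\|h\|_H$ times its unit-ball error. Bounding $\|Sf - B_m f\|_H \leq e^{\det,\nonlin}(m,S)+\epsilon$ on the unit ball of $F$ and letting $\epsilon \to 0$ gives the product with no stray $\|T\|$ or $\|S\|$. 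Both ingredients --- linearity of the Monte Carlo method (for the homogeneity scaling and for splitting the information) and the residual trick (to eliminate the $\|T\|$-term entirely) --- are indispensable, and both are absent from your write-up.
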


The idea behind is a two-stage algorithm.
Specifically for a chain of embeddings~\mbox{$F \embed \Hilbert \embed G$},
within the first step, we use a nonlinear deterministic method~$B_m$
to give a rough approximation \mbox{$h = B_m(f)$},
which shall be close to the input~$f \in F$
with respect to the norm of an auxiliary Hilbert space~$\Hilbert$.
Within the second step, we compute an improved approximation
by applying a linear Monte Carlo method~$A_n^{\omega}$
(based on the method from Proposition~\ref{prop:Ma91_l2G})
to the residual~\mbox{$f-h$},
resulting in an output~\mbox{$g := h + A_n^{\omega}(f-h) \in G$}.

\section{Monte Carlo approximation of Sobolev classes in sup-norm}
\label{sec:main}

In this section we shall apply the result from the previous section
to investigate the asymptotic behaviour of the Monte Carlo error
for embeddings of periodic Sobolev spaces of dominating mixed smoothness
into $L_\infty(\tord)$.
We first give an introduction to these spaces. 

Let $\tord=[0,1)^d$ denote the $d$-torus. 
For $\vecs=(s_1,\ldots,s_d) \in \N_0^d$ and $j \in \N_0$ we put
\begin{equation} \label{eq:rho(s)}
	\begin{split}
		\rho(\vecs)
			&\,:=\, \bigl\{\veck \in \Z^d \,:\,
								\lfloor 2^{s_\ell-1} \rfloor
									\leq |k_\ell|
									< 2^{s_\ell},\ 
								\ell = 1,\ldots,d
					\bigr\} \,, \\
		Q_j &\,:=\, \bigcup_{|\vecs|_1 = j} \rho(\vecs) \,,
	\end{split}
\end{equation}
thus defining dyadic blocks~$\rho(\vecs)$ and hyperbolic layers~$Q_j$.
For a function~\mbox{$f\in L_1(\tord)$}
(being 1-periodic in every component)
the Fourier coefficients of $f$ are denoted by
\begin{equation*}
	c_{\veck}(f)
		\,:=\, \int_{\tord} f(\vecx) \, \euler^{- 2 \pi \imag (\veck,\vecx)} \dint \vecx
	,\qquad \veck\in \Z^d \,.
\end{equation*}
We put
\begin{equation*}
	[\delta_{\vecs} f](\vecx)
		\,:=\, \sum_{\veck \in \rho(\vecs)} c_k(f) \, \euler^{2\pi \imag (\veck,\vecx)}\,,
	\qquad \vecx \in \tord\,.
\end{equation*}

\begin{definition} \label{def:Wmix}
	Let $r \geq 0$ and $1 < p < \infty$.
	Then the periodic Sobolev spaces of dominating mixed smoothness $\W^r_p(\tord)$
	is the collection of all functions~\mbox{$f:\tord \to \C$}
	($1$-periodic in every component) such that
	\begin{equation*}
		\bigl\| f \bigr\|_{\W_p^r(\tord)}
			\,:=\, 
				\biggl\|\Bigl(\sum_{\vecs \in\N_0^d}
												2^{2 r |\vecs|_1}\, 
													\bigl| \delta_{\vecs} f \bigr|^2
								\Bigr)^{1/2}
				\biggr\|_{L_p(\tord)}
			\,<\, \infty\,.
	\end{equation*}
\end{definition}

\begin{remark}[Equivalent representations]
	Spaces of this type were studied systematically in the book
	of Schmeisser and Triebel~\cite{ST87}.
	They are the special case \mbox{$\W_p^r(\tord)=S^r_{p,2}F(\tord)$}
	of the Triebel-Lizorkin scale.
	Translated into the periodic setting,
	the authors of this book define a smooth resolution of unity
	to generate the building blocks~\mbox{$\delta_{\vecs} f$}.
	This is not necessary in our context dealing strictly with~\mbox{$1<p<\infty$}.
	A representation using cut-outs of the Fourier series
	by characteristic functions as given above
	is called 'Lizorkin representation', \mbox{cf.~\cite[Section~3.5.3]{ST87}}.

	If $r=0$ and $1<p<\infty$ we get back $L_p(\tord)$
	by the consequence of the Littlewood-Paley theorem.
	Note that if $r>1/p$,
	then the space $\W_p^r(\tord)$ is continuously embedded into $C(\tord)$,
	cf.~\cite[Section~2.4]{ST87}.
	For~$r \in \N$ and $1<p<\infty$ this norm is equivalent to
	the more ``natural'' norm
	\begin{equation}\label{eq:derivnorm}
		\|f\|_{\W_p^r(\tord)}'
			\,=\, \Biggl(\sum_{I \subseteq \{1,\ldots,d\}}
							\biggl\| \Bigl(\prod_{i \in I} \partial_i^r \Bigr) f
							\biggr\|_{L_p(\tord)}^p
						\Biggr)^{1/p} \,,
	\end{equation}
	see \cite[Section~2.3]{ST87}.
	There are several alternative approaches to a definition of a norm
	for these spaces, Fang and Duan~\cite{FD07,FD08}, for instance,
	chose a representation by convolutions with \mbox{$L_p$-functions},
	see Ullrich~\cite[Section~2.7]{Ul06} for the equivalence in the case $1<p<\infty$.
	For the critical cases~\mbox{$p \in \{1,\infty\}$}, unfortunately,
	these approaches to define a norm lead to different spaces.
\end{remark}

In the special case $p=2$, the space $\W_2^r(\tord)$ is a Hilbert space
and the norm given in Defintion~\ref{def:Wmix} can be written as
\begin{equation*}
	\|f\|_{\W_2^r(\tord)}
			\,=\, \biggl(\sum_{j=0}^{\infty} 2^{2jr} \sum_{k\in Q_j}|c_k(f)|^2
						\biggr)^{1/2} \,.
\end{equation*}
Note that the system
\begin{equation*}
	\psi_{\veck}(\vecx)
		\,:=\, 2^{-rj} \, \euler^{2 \pi \imag (\veck, \vecx)}
	\qquad\text{for}\;
	\veck \in Q_j \subset \Z^d,\; j \in \N_0,
\end{equation*}
forms an orthonormal basis of~$\W_2^r(\tord)$
with respect to the above norm.
Recognize the basis representation
of the fundamental Monte Carlo approximation method,
see Remark~\ref{rem:Ma91_basis},
within the definition of the following linear Monte Carlo algorithm for
the~$L_{\infty}$-approximation of functions from $\W_2^r(\tord)$. 

\begin{algorithm}\label{alg:MClin}
	For $J\in \N_0$ we put
	\begin{equation*}
		Q_{[J]}
			\,:=\, \bigcup_{j=0}^J Q_j
			\,=\, \bigcup_{0 \leq |\vecs|_1 \leq J}
					\rho(\vecs) \,.
	\end{equation*}
	Fourier coefficients from this index set
	will be collected directly,	that is,
	\mbox{$n_0 := |Q_{[J]}|$} pieces of information are used
	for the deterministic part of the algorithm.
	The same amount of linear information is spent
	on the collective approximation of Fourier coefficients
	from the hyperbolic layers~$Q_j$ for~\mbox{$j=J+1,\ldots,L$}
	via the fundamental Monte Carlo approximation method
	from Proposition~\ref{prop:Ma91_l2G}.
	
	In detail, we approximate~\mbox{$f \in \W_2^r(\Torus^d)$} by
	\begin{equation*}
		A_{J,L}^{\omega}(f)
			\,:=\, \sum_{\veck \in Q_{[J]}}
								c_{\veck}(f) \, \euler^{2 \pi \imag (\veck, \cdot)}
							+ \frac{1}{n_0} \, \sum_{i=1}^{n_0} L^{\omega}_i(f) \, g^{\omega}_i \,,
	\end{equation*}
	where we use random linear functionals and corresponding random functions
	defined by
	\begin{equation*}
		L^{\omega}_i(f)
			\,:=\, \sum_{j=J+1}^L
								2^{rj} \sum_{\veck \in Q_j}
													\xi_{i,\veck} \, c_{\veck}(f) \,,
		\qquad
		g^{\omega}_i(\vecx)
			\,:=\, \sum_{j=J+1}^L
						2^{-rj} \sum_{\veck \in Q_j}
											\xi_{i,\veck} \, \euler^{2 \pi \imag (\veck, \vecx)} \,,
	\end{equation*}
	here the~\mbox{$\xi_{i,\veck}$} are i.i.d.\ standard Gaussian random variables,
	$L\in \N$ will be chosen later on.

	Note that for~\mbox{$|\vecs|_1 = j$} we have~\mbox{$|\rho(\vecs)| = 2^j$}.
	Further,
	\begin{equation*}
		\frac{j^{d-1}}{(d-1)!}
			\,\leq\, \bigl|\{\vecs \in \N_0^d \,:\,
										|\vecs|_1 = j\}
								\bigr|
			\,=\, \binom{d+j-1}{d-1}
			\,\leq\, j^{d-1} \,.
	\end{equation*}
	Hence,
	\begin{equation} \label{eq:|Q_j|}
		|Q_j| \,\asymp\, 2^j \, j^{d-1} \,,
		\qquad\text{and}\qquad
		|Q_{[J]}|
			\,\asymp\, \sum_{j=0}^J 2^j \, j^{d-1}
			\,\asymp\, 2^J \, J^{d-1} \,.
	\end{equation}
	Then for the total information cost~$n = 2n_0$ we obtain
	\begin{equation}\label{eq:n><...}
		n \,\asymp\, 2^J \, J^{d-1} \,,
		\qquad \text{and} \qquad
		\log n \,\asymp\, J \,.
	\end{equation}
\end{algorithm}

\begin{remark}[Real-valued version of Algorithm~\ref{alg:MClin}] \label{rem:realAlg}
	Even when inserting a real-valued function $f \in \W_2^r(\tord)$,
	the output of the above algorithm in general will not be a real-valued function.
	Moreover, the functionals~$L^{\omega}_i$ may return complex numbers,
	which is not feasible in the real setting~$\K = \R$.
	Therefore we need a modified version of the algorithm
	based on
	strictly real-valued orthonormal basis functions.
	In detail, for nonzero~$\veck \in Q_j$ one should replace
	the pair of adjoint functions
	\mbox{$\psi_{\veck} = 2^{-rj} \, \euler^{2 \pi \imag (\veck,\cdot)}$}
	and \mbox{$\psi_{-\veck} = 2^{-rj} \, \euler^{-2 \pi \imag (\veck,\cdot)}$}
	by
	\begin{equation*}
		2^{-rj} \, \sqrt{2} \, \cos(2 \pi \imag (\veck,\cdot))
		\qquad \text{and} \qquad
		2^{-rj} \, \sqrt{2} \, \sin(2 \pi \imag (\veck,\cdot)) \,.
	\end{equation*}
	Using the basis representation from Remark~\ref{rem:Ma91_basis}
	with this modified basis,
	we obtain a valid algorithm for the real-valued setting,
	the corresponding error estimates will be similar.
\end{remark}

For the error analysis we need the following estimate on the expected
norm of a random trigonometric polynomial.
\begin{lemma} \label{lem:E|trigopol|}
	Let $E \subset \Z^d$ and define~\mbox{$\deg E := \max_{\veck \in E} |\veck|_1$},
	that is the largest degree of trigonometric polynomials that are
	composed of exponentials~$\euler^{2\pi\imag (\veck,\cdot)}$, \mbox{$\veck \in E$}.
	Then for~\mbox{$\deg E \geq 2$} we have
	\begin{equation*}
		\expect \biggl\| \sum_{\veck \in E}
											\xi_{\veck} \, \euler^{2 \pi \imag (\veck,\cdot)}
						\biggr\|_{L_{q}(\tord)}
			\,\preceq\,
				\begin{cases}
					\sqrt{q \, |E|} 						& \text{if }\; 2<q<\infty \\
					\sqrt{|E| \, \log (\deg E)}	& \text{if }\; q=\infty \,,
				\end{cases}
	\end{equation*}
	where $\xi_{\veck}$ are i.i.d.\ standard Gaussian random variables.
\end{lemma}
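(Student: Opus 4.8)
The plan is to establish the finite-exponent estimate first by a direct moment computation, and then to bootstrap it to the endpoint $q=\infty$ using that the random object is a trigonometric polynomial of controlled degree. Throughout write $T^{\omega}(\vecx):=\sum_{\veck\in E}\xi_{\veck}\,\euler^{2\pi\imag(\veck,\vecx)}$.

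For $2<q<\infty$ I would combine Tonelli's theorem with Gaussian moment bounds. For each fixed $\vecx\in\tord$ the quantity $T^{\omega}(\vecx)$ is a linear combination of the i.i.d.\ Gaussians~$\xi_{\veck}$, so its real and imaginary parts are centred Gaussians of variance at most $\expect|T^{\omega}(\vecx)|^2=|E|$; consequently the $q$-th absolute moment satisfies $\expect|T^{\omega}(\vecx)|^q\preceq(C\sqrt{q}\,)^q\,|E|^{q/2}$ with $C$ absolute and the bound uniform in~$\vecx$. Integrating over the torus and using Tonelli,
\begin{equation*}
	\expect\|T^{\omega}\|_{L_q(\tord)}^q
		\,=\, \int_{\tord}\expect|T^{\omega}(\vecx)|^q\dint\vecx
		\,\preceq\, (C\sqrt{q}\,)^q\,|E|^{q/2}\,,
\end{equation*}
and Jensen's inequality $\expect\|T^{\omega}\|_{L_q}\le(\expect\|T^{\omega}\|_{L_q}^q)^{1/q}$ gives $\expect\|T^{\omega}\|_{L_q(\tord)}\preceq\sqrt{q\,|E|}$, which is the first case.

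For the endpoint $q=\infty$ I would transfer this to the sup-norm via a Nikol'skii-type inequality. Put $N:=\deg E$; since $|\veck|_1\le N$ forces each coordinate $|k_\ell|\le N$, the spectrum of $T^{\omega}$ lies in $\{-N,\dots,N\}^d$, and for every finite $q>2$ Nikol'skii's inequality for trigonometric polynomials yields the \emph{pointwise} (deterministic) estimate $\|T^{\omega}\|_{L_\infty(\tord)}\preceq N^{d/q}\,\|T^{\omega}\|_{L_q(\tord)}$ with a constant depending only on~$d$. Taking expectations and inserting the finite-$q$ bound,
\begin{equation*}
	\expect\|T^{\omega}\|_{L_\infty(\tord)}
		\,\preceq\, N^{d/q}\,\expect\|T^{\omega}\|_{L_q(\tord)}
		\,\preceq\, N^{d/q}\,\sqrt{q\,|E|}\,.
\end{equation*}
Choosing $q\asymp\log N=\log(\deg E)$ renders $N^{d/q}$ bounded while turning $\sqrt{q}$ into $\sqrt{\log(\deg E)}$, giving the claimed rate $\sqrt{|E|\,\log(\deg E)}$. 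This is legitimate precisely because $\deg E\ge2$ keeps $\log(\deg E)$ bounded away from~$0$; the finitely many residual cases in which $\log(\deg E)$ is too small to serve as an exponent are trivial, since then $E$ is confined to a bounded box and both sides are comparable constants.

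The step I expect to be the crux is the passage $q\to\infty$: the moment identity degenerates at the endpoint, so one cannot simply let $q$ tend to infinity in the first case. The resolution exploits that $T^{\omega}$ is a polynomial of degree~$N$, so the loss incurred when estimating $L_\infty$ by $L_q$ is only the factor $N^{d/q}$, which the logarithmic choice of~$q$ absorbs; the surviving $\sqrt{\log(\deg E)}$ is exactly the price of this $L_q\to L_\infty$ transfer. Should a self-contained argument be preferred over invoking Nikol'skii's inequality, I would instead discretise: bound $\max$ over an $\Order(N^{-1})$-net of $\tord$ by a Gaussian maximal inequality, contributing $\sqrt{|E|\log N}$ through the $\Order(N^d)$ net points, and close the gap to the genuine supremum with Bernstein's inequality $\|\nabla T^{\omega}\|_{\infty}\preceq N\,\|T^{\omega}\|_{\infty}$.
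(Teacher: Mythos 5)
Your proof is correct, but it takes a genuinely different route from the paper's. The paper deduces the lemma from a result of Belinsky~\cite[Lemma~3.4]{Be98} on the expected $L_q$-norm of trigonometric polynomials whose coefficient vector is uniformly distributed on the Euclidean sphere~$\Sphere^{|E|-1}$: the Gaussian case then follows in two lines via the polar decomposition \mbox{$\vecxi = \|\vecxi\|_2\,(\vecxi/\|\vecxi\|_2)$}, with radius and direction independent by rotational invariance, and \mbox{$\expect\|\vecxi\|_2 \le \sqrt{|E|}$}. You instead prove both cases from scratch: for \mbox{$2<q<\infty$} by Tonelli plus the pointwise Gaussian moment bound, and for \mbox{$q=\infty$} by the Nikol'skii transfer \mbox{$\|T\|_{L_\infty} \preceq N^{d/q}\,\|T\|_{L_q}$} with the classical choice \mbox{$q \asymp \log N$}. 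Both steps are sound; two constants deserve a word of care, and you handle or can handle both. First, in the finite-$q$ case the implied constant must be uniform in $q$ (since $q$ appears explicitly inside the bound), which your argument delivers because \mbox{$(\expect|g|^q)^{1/q} \le C\sqrt{q}\,\sigma$} holds with an absolute~$C$. Second, the Nikol'skii constant must also be uniform in $q$, as you send \mbox{$q \to \infty$} with~$N$; this is true, e.g., by writing \mbox{$T = T * V_N$} with the de la Vall\'ee Poussin kernel and estimating \mbox{$\|V_N\|_{L_{q'}} \le \|V_N\|_{L_\infty}^{1/q} \|V_N\|_{L_1}^{1/q'} \preceq N^{d/q}$}, so the constant depends only on~$d$, exactly as you assert. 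Your disposal of small $\deg E$ is also fine, since then $|E|$ is bounded and \mbox{$\expect\|T^\omega\|_{L_\infty} \le \sum_{\veck \in E} \expect|\xi_\veck| \preceq 1$}. As for what each approach buys: the paper's route is shorter but outsources the analytic core to Belinsky's lemma (and the sphere-to-Gauss reduction is a neat trick worth knowing in its own right); yours is self-contained, keeps all constants explicit, and makes transparent that the endpoint factor $\sqrt{\log(\deg E)}$ is precisely the price of the \mbox{$L_q \to L_\infty$} transfer at \mbox{$q \asymp \log N$} --- in effect you reprove Belinsky's estimate in the Gaussian normalization rather than importing it, and your concluding discretisation sketch (Gaussian maximum over an $\Order(N^{-1})$-net plus Bernstein's inequality) is a valid third variant of the same endpoint mechanism.
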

\begin{proof}
	Let~\mbox{$\vecnu \in \R^E$} be uniformly distributed
	on the euclidean unit \mbox{sphere}~\mbox{$\Sphere^{|E|-1} \subset \R^E$}.
	It has been proven by Belinsky~\cite[Lemma 3.4]{Be98} that
	\begin{equation*}\label{eq:belinsky}
		\expect \biggl\| \sum_{\veck \in E}
												\nu_{\veck} \, \euler^{2 \pi \imag (\veck,\cdot)}
							\biggr\|_{L_{q}(\tord)}
				\,\preceq\,
		\begin{cases} 
			\sqrt{q}						&\text{if }\; 2<q< \infty \,,\\
			\sqrt{\log(\deg E)}	&\text{if }\;  q=\infty \,.
		\end{cases}
	\end{equation*}
	Let \mbox{$\vecxi \in \R^E$} be a standard Gaussian vector.
	Due to the rotational invariance of the standard Gaussian measure,
	for computing expected values depending on~$\vecxi$
	we may switch to polar coordinates,
	in other words,
	\mbox{$\vecxi = \|\vecxi\|_2 \, (\vecxi / \|\vecxi\|_2)$},
	where~$\|\vecxi\|_2$ is independent from~\mbox{$\vecxi / \|\vecxi\|_2 \sim \vecnu$}.
	Exploiting the homogeneity of norms, we obtain
	\begin{equation*}
		\expect \biggl\| \sum_{\veck \in E}
											\xi_{\veck} \, \euler^{2 \pi \imag (\veck,\cdot)}
						\biggr\|_{L_{q}(\tord)}
			\,=\, \Biggl(\expect \biggl\| \sum_{\veck \in E}
																			\nu_{\veck} \, \euler^{2 \pi \imag (\veck,\cdot)}
														\biggr\|_{L_{q}(\tord)}
						\Biggr)
							\, \left(\expect \bigl\| \vecxi \bigr\|_{\ell_2^E} \right).
	\end{equation*}
	Employing $\expect \bigl\| \vecxi \bigr\|_{\ell_2^E}
							\leq \sqrt{\expect \bigl\| \vecxi \bigr\|_{\ell_2^E}^2}
							= \sqrt{|E|}$,
	and the result of Belinsky, we finish the proof.
\end{proof}

Via Algorithm~\ref{alg:MClin} we achieve the following estimate.
\begin{proposition} \label{prop:W2Linf}
	Let $r>1/2$. Then we have
	\begin{equation*}
		e^{\ran,\lin}\left(n, \W^r_2(\tord) \embed L_{\infty}(\tord)
									\right)
			\,\preceq\, \left(\frac{ (\log n)^{d-1}}{n}\right)^{r} \, \sqrt{\log n} \,,
	\end{equation*}
	where the implicit constant may depend on~$r$ and~$d$.
\end{proposition}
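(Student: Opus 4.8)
The plan is to estimate the global error of Algorithm~\ref{alg:MClin} by splitting the input into three frequency ranges and matching them to the three mechanisms of the algorithm. For $f$ with $\|f\|_{\W_2^r(\tord)}\le 1$, write $f = f_{\mathrm{low}}+f_{\mathrm{mid}}+f_{\mathrm{high}}$, collecting the Fourier coefficients with $\veck\in Q_{[J]}$, with $\veck\in\bigcup_{j=J+1}^L Q_j$, and with $\veck\in Q_j$ for $j>L$, respectively. The deterministic part of the algorithm reproduces $f_{\mathrm{low}}$ exactly, the random part acts only on $f_{\mathrm{mid}}$, and $f_{\mathrm{high}}$ is simply discarded. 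Applying the triangle inequality inside the $L_\infty$-norm and then taking expectations bounds the error at $f$ by the expected Monte Carlo error on the middle block plus the deterministic truncation error $\|f_{\mathrm{high}}\|_{L_\infty(\tord)}$, and it remains to treat these two terms and take the supremum over the unit ball.

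For the middle block I would observe that the random part of Algorithm~\ref{alg:MClin} is exactly the method of Remark~\ref{rem:Ma91_basis}, run on the subspace of $\W_2^r(\tord)$ spanned by the orthonormal system $\psi_{\veck}=2^{-rj}\euler^{2\pi\imag(\veck,\cdot)}$, $\veck\in Q_j$, $J<j\le L$, with $S$ the embedding into $L_\infty(\tord)$. Since $\|f_{\mathrm{mid}}\|_{\W_2^r(\tord)}\le 1$, the quoted estimate yields $\expect\|f_{\mathrm{mid}}-A_{n}^{\omega}(f)\|_{L_\infty(\tord)}\preceq n_0^{-1/2}\,\expect\bigl\|\sum_{j=J+1}^{L}2^{-rj}\sum_{\veck\in Q_j}\xi_{\veck}\,\euler^{2\pi\imag(\veck,\cdot)}\bigr\|_{L_\infty(\tord)}$. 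Decomposing this random polynomial into its hyperbolic layers, applying the triangle inequality and then Lemma~\ref{lem:E|trigopol|} layer by layer (valid since $\deg Q_j\asymp 2^j\ge 2$ for large $J$, with $|Q_j|\asymp 2^j j^{d-1}$ and $\log\deg Q_j\asymp j$) gives a bound $\preceq n_0^{-1/2}\sum_{j=J+1}^{L}2^{-(r-1/2)j}\,j^{d/2}$. Here the hypothesis $r>1/2$ makes the series geometric, so it is controlled by its first term $\asymp 2^{-(r-1/2)J}J^{d/2}$; inserting $n_0\asymp 2^J J^{d-1}$ and the cost relations $n\asymp 2^J J^{d-1}$, $\log n\asymp J$ recorded in Algorithm~\ref{alg:MClin} collapses the Monte Carlo term to $2^{-rJ}\sqrt J\asymp \bigl((\log n)^{d-1}/n\bigr)^{r}\sqrt{\log n}$, which is precisely the claimed rate.

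It then remains to make the truncation harmless. Using the Nikol'skij-type inequality $\|\delta_{\vecs}f\|_{L_\infty(\tord)}\le 2^{|\vecs|_1/2}\|\delta_{\vecs}f\|_{L_2(\tord)}$ (from $|\rho(\vecs)|=2^{|\vecs|_1}$ and Cauchy--Schwarz on the coefficients) together with the identity $\|f\|_{\W_2^r(\tord)}^2=\sum_{\vecs}2^{2r|\vecs|_1}\|\delta_{\vecs}f\|_{L_2(\tord)}^2$, a second Cauchy--Schwarz bounds $\|f_{\mathrm{high}}\|_{L_\infty(\tord)}$ by $\bigl(\sum_{|\vecs|_1>L}2^{(1-2r)|\vecs|_1}\bigr)^{1/2}\preceq 2^{(1/2-r)L}L^{(d-1)/2}$, where $r>1/2$ again secures convergence. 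Crucially, $L$ controls only how many layers the $n_0$ random functionals try to capture, not the information cost $n\asymp 2^J J^{d-1}$, so I am free to take $L=\lceil\alpha J\rceil$ with any fixed $\alpha>r/(r-\tfrac12)$. Then $2^{(1/2-r)L}\le 2^{-r'J}$ with $r'>r$, so the exponential gain dominates the polynomial factor and the truncation term is of strictly smaller order than $2^{-rJ}\sqrt J$. Adding the two contributions finishes the estimate.

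The main thing to get right is the coupling of the two parameters: $J$ alone fixes both the cost and the leading rate, while $L$ must be taken a large enough multiple of $J$ so that the $j>L$ tail is dominated without increasing $n$; along the way one must also check $n_0<m=\bigl|\bigcup_{j=J+1}^{L}Q_j\bigr|$ so that Proposition~\ref{prop:Ma91_l2G} is applicable. The hypothesis $r>1/2$ is used twice, for the geometric summation in both the Monte Carlo bound and the truncation bound, which explains why it is exactly the right threshold.
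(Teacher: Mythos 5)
Your proof is correct and takes essentially the same route as the paper's: the same three-part frequency decomposition matched to Algorithm~\ref{alg:MClin}, the same layerwise application of Lemma~\ref{lem:E|trigopol|} to the Monte Carlo term via Corollary~\ref{cor:Ma91_l2G} and Remark~\ref{rem:Ma91_basis}, and the same Cauchy--Schwarz truncation bound with the same geometric summations at $r>\tfrac12$. Your explicit coupling $L=\lceil\alpha J\rceil$ with $\alpha>r/(r-\tfrac12)$, together with the check $n_0<m$ needed for Proposition~\ref{prop:Ma91_l2G}, merely makes precise the paper's terser ``choose $L$ sufficiently large''; nothing substantive differs.
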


\begin{proof}
	We conduct the error analysis for the algorithm~$A_{J,L}^{\omega}$,
	at first assuming that our information budget is exactly what the algorithm needs,
	i.e.\ \mbox{$n = 2n_0 = 2|Q_{[J]}|$} and \eqref{eq:n><...} holds.
	
	Let~\mbox{$f \in \W^r_2(\tord)$} with
	\mbox{$\|f\|_{\W_2^r(\tord)} \leq 1$}.
	Decompose the input function into three parts,
	\begin{equation*}
		f = \Delta^J f + \Delta_J^L f + \Delta_L^{\infty} f \,,
	\end{equation*}
	where
	\begin{equation*}
		\Delta^J f
			\,:=\, \sum_{j=0}^{J} \sum_{|\vecs|_1 = j} \delta_{\vecs} f \,,
		\qquad
		\Delta_J^L f
			\,:=\, \sum_{j=J+1}^L \sum_{|\vecs|_1 = j} \delta_{\vecs} f \,,
		\qquad
		\Delta_L^{\infty} f
			\,:=\, \sum_{j=L+1}^{\infty} \sum_{|\vecs|_1 = j} \delta_{\vecs} f \,.
	\end{equation*}			
	The truncation parameter $L \in \N$ will be chosen later.
	Applying the algorithm~$A_{J,L}^\omega$ to~$f$,
	via the triangle inequality we obtain
	\begin{equation*}
		e(A_{J,L}^\omega,f)
			\,\leq\,
				0 + e(A_{J,L}^\omega, \Delta_J^L f)
					+ \|\Delta_L^{\infty} f\|_{L_{\infty}(\tord)} \,.
	\end{equation*}
	The first term vanishes
	since \mbox{$\Delta^J f$} is recovered exactly by~$A_{J,L}^\omega$.
	The second term can be estimated by Corollary~\ref{cor:Ma91_l2G},
	see also Remark~\ref{rem:Ma91_basis}.
	In doing so we need the following estimate on the expected norm
	with i.i.d.\ standard Gaussian random variables~\mbox{$\xi_{\veck}$},
	\begin{align*}
		\expect \biggl\|\sum_{j=J+1}^L
											2^{-rj} \sum_{\veck \in Q_j}
																\xi_{\veck} \, \euler^{2 \pi \imag (\veck, \cdot)}
						\biggr\|_{L_{\infty}(\tord)}
			&\,\leq\,
					\sum_{j=J+1}^L 2^{-rj}
						\expect \biggl\|\sum_{\veck \in Q_j}
															\xi_{\veck} \, \euler^{2 \pi \imag (\veck, \cdot)}
										\biggr\|_{L_{\infty}(\tord)} \\
		[\text{Lemma~\ref{lem:E|trigopol|}}]\qquad
			&\,\preceq\,
					\sum_{j=J+1}^L 2^{-rj}
						\sqrt{|Q_j| \log (2^j-1)} \\
			&\,\stackrel{\eqref{eq:|Q_j|}}{\asymp}\,
					\sum_{j=J+1}^L 2^{-(r-1/2)j} \, j^{d/2} \\
			&\,\asymp\, 2^{-(r-1/2) J} \, J^{d/2} \,.
	\end{align*}
	This leads to the error bound
	\begin{equation*}
		e\left(A_{J,L}, \Delta_J^L f\right)
			\,\preceq\,
				\frac{2^{-(r-1/2)J} \, J^{d/2}}{\sqrt{n_0}}
			\,\stackrel{\eqref{eq:n><...}}{\asymp}\,
				2^{-r J} \, \sqrt{J}
			\,\asymp\, \left(\frac{(\log n )^{d-1}}{n}\right)^r \sqrt{\log n}.
	\end{equation*}
	We obtain this result uniformly in $L$.
	To finish to proof we consider the third term.
	Using the Cauchy-Schwartz inequality,
	and having $\|f\|_{\W_2^r(\tord)} \leq 1$ in mind,
	we get
	\begin{align*}
		\bigl\| \Delta_L^{\infty} f \bigr\|_{\infty}
			&\,\leq\, \sum_{j = L+1}^{\infty} \sum_{\veck \in Q_j} |c_{\veck}(f)|
			\,\leq\, \biggl(\sum_{j = L+1}^{\infty} 2^{-2rj} |Q_j| \biggr)^{1/2}
									\bigl\| \Delta_L^{\infty} f \bigr\|_{\W_2^r(\tord)} \\
			&\,\preceq\, \biggl(\sum_{j = L+1}^{\infty} 2^{-(2r-1)j}\, j^{d-1}
									\biggr)^{1/2}\\
			&\,\asymp\,  2^{-(r-1/2) L} \, L^{(d-1)/2}\,.
	\end{align*}
	Choosing~$L$ sufficiently large (depending on $d$ and $r$),
	the truncation error is dominated	by the error estimate for the Monte Carlo part.

	
	Finally, concerning the quantity
	\mbox{$e^{\ran,\lin}\left(n, \W^r_2(\tord) \embed L_{\infty}(\tord)
											\right)$},
	note that for any arbitrary information budget~\mbox{$n \in \N$}
	we may choose an algorithm~$A_{J,L}^\omega$ with \mbox{$2 \, |Q_{[J]}| \leq n$}.
	The asymptotic relation~\eqref{eq:n><...} between~$J$ and $n$ still holds,
	with slightly worse constants though.
	Hence, we obtain the desired asymptotic order.
\end{proof}

\begin{remark}[Differences to previous research]
	One may employ Algorithm~\ref{alg:MClin}
	for similar calculations on the $L_q$-approximation,
	\mbox{$2 < q < \infty$},
	using the corresponding results from Lemma~\ref{lem:E|trigopol|}.
	That way one can reproduce
	the exact asymptotic order for the Monte Carlo approximation
	of~\mbox{$\W^r_2(\tord) \embed L_q(\tord)$}
	that has been determined
	by Fang and Duan~\cite[Theorem~1]{FD08}.
	The algorithm behind their estimates, however,
	is hidden within theory of pseudo $s$-scales.
	In particular, by conducting Maiorov's discretization technique,
	they apply the fundamental Monte Carlo method from Proposition~\ref{prop:Ma91_l2G}
	to the approximation of single blocks~\mbox{$\delta_{\vecs} f$},
	using a correspondence to sequence space embeddings.
	First of all, in the case of~$L_{\infty}$-approximation
	we lack a similar correspondence to sequence space embeddings,
	instead we took the estimate from Lemma~\ref{lem:E|trigopol|}.
	Second, the fundamental Monte Carlo method draws its strength
	from \emph{simultaniously} measuring \emph{all} Fourier coefficients that are to
	be approximated randomly, compare \cite[Section~3.2.3]{Ku17}.
\end{remark}

Our main result reads as follows.
\begin{theorem}\label{thm:main}
	(i)\; Let $1<p<\infty$ and $r>\max(1/p,1/2)$. Then we have
		\begin{equation*}
				\left(\frac{ (\log n)^{(d-1)}}{n}\right)^{r-(\frac{1}{p}-\frac{1}{2})_+}
				  \preceq\,
						  e^{\ran,\lin}\left(n,\W^r_p(\tord) \embed L_{\infty}(\tord)
															\right)
				  \,\preceq\,
							  \left(\frac{ (\log n)^{(d-1)}}{n}
									\right)^{r-(\frac{1}{p}-\frac{1}{2})_+}
										\sqrt{\log n} \,.
		\end{equation*}
	(ii)\; Let either $1<p<2$ and $r>1$, or $2\leq p<\infty$ and $r>1/2$.
		Then we have
		\begin{equation*}
				\left(\frac{ (\log n)^{(d-1)}}{n}\right)^r
					\ \preceq\ e^{\ran,\nonlin}\left(n,\W^r_p(\tord) \embed L_{\infty}(\tord)
																		\right)
					\ \preceq\ \left(\frac{ (\log n)^{(d-1)}}{n}
															\right)^r
																\sqrt{\log n} \,.
		\end{equation*}
	The implicit constants may depend on~$r$, $d$, and $p$.
\end{theorem}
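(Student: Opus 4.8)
The plan is to obtain all four estimates by reducing the $L_\infty$-problem, via the embedding structure of the spaces $\W^r_p(\tord)$, to the two tools already established: Proposition~\ref{prop:W2Linf} at the Hilbert endpoint $p=2$, and Proposition~\ref{prop:MCnonlin} for the two-stage nonlinear method. The two lower bounds I would import from the finite-$q$ theory of Fang and Duan.

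For the linear upper bound in~(i) I would reduce to $p=2$. When $2\le p<\infty$, applying the probability-measure inequality $\|g\|_{L_2(\tord)}\le\|g\|_{L_p(\tord)}$ to the square function in Definition~\ref{def:Wmix} yields $\|f\|_{\W^r_2}\le\|f\|_{\W^r_p}$, so the unit ball of $\W^r_p$ lies inside that of $\W^r_2$ and Proposition~\ref{prop:W2Linf} gives the bound with exponent $r=r-(\frac1p-\frac12)_+$. When $1<p<2$ I would instead invoke the Sobolev-type embedding for dominating mixed smoothness $\W^r_p(\tord)\embed\W^{r'}_2(\tord)$ along the line $r-\frac1p=r'-\frac12$, that is $r'=r-(\frac1p-\frac12)$; the hypothesis $r>\max(\frac1p,\frac12)$ forces $r'>\frac12$, so Proposition~\ref{prop:W2Linf} applies and delivers precisely the exponent $r'$. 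For the matching lower bound I would use monotonicity in the target norm: since $\|\cdot\|_{L_q(\tord)}\le\|\cdot\|_{L_\infty(\tord)}$ for every finite $q$, we have $e^{\ran,\lin}(n,\W^r_p\embed L_q)\le e^{\ran,\lin}(n,\W^r_p\embed L_\infty)$, and it suffices to lower bound the easier $L_q$-problem. Fixing any $q\in(2,\infty)$ and quoting the sharp $L_q$-rate of Fang and Duan~\cite{FD08}, whose main term is $((\log n)^{d-1}/n)^{r-(\frac1p-\frac12)_+}$ with no logarithmic surplus, produces the claimed bound.

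For part~(ii) with $2\le p<\infty$ the upper bound is immediate, since the exponent in~(i) already equals $r$ and $e^{\ran,\nonlin}\le e^{\ran,\lin}$. The interesting range is $1<p<2$, where I would feed Proposition~\ref{prop:MCnonlin} the chain $\W^r_p\embed\Hilbert\embed L_\infty$ with auxiliary Hilbert space $\Hilbert=\W^{r'}_2(\tord)$ for a fixed $r'$ in the window $\frac12<r'<r-\frac12$, which is nonempty precisely because $r>1$. The linear factor $e^{\ran,\lin}(n,\W^{r'}_2\embed L_\infty)$ is controlled by Proposition~\ref{prop:W2Linf} and contributes the exponent $r'$ together with the $\sqrt{\log n}$ surplus. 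For the deterministic factor $e^{\det,\nonlin}(m,\W^r_p\embed\W^{r'}_2)$ I would pass through the lifting isomorphism that multiplies the $j$-th block by $2^{r'j}$, identifying this embedding with $\W^{r-r'}_p\embed L_2$, and then quote the known nonlinear deterministic $L_2$-approximation rate for $1<p<2$, which in this range of smoothness equals $((\log m)^{d-1}/m)^{r-r'}$ with the full smoothness difference and no $(\frac1p-\frac12)$ penalty. Taking $m\asymp n$, the product in Proposition~\ref{prop:MCnonlin} telescopes to $((\log n)^{d-1}/n)^{r}\sqrt{\log n}$.

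The lower bound in~(ii) I would again obtain from norm monotonicity, $e^{\ran,\nonlin}(n,\W^r_p\embed L_q)\le e^{\ran,\nonlin}(n,\W^r_p\embed L_\infty)$ for finite $q$, now invoking the nonlinear $L_q$-lower bound of Fang and Duan~\cite{FD07}; since that bound rests on Heinrich's Gaussian technique~\cite{He92} it is valid for adaptive algorithms of varying cardinality as well, so it transfers without change. I expect the main obstacle to be the nonlinear upper bound: one must check that the lifting isomorphism genuinely intertwines $\W^r_p\embed\W^{r'}_2$ with $\W^{r-r'}_p\embed L_2$ at the level of norms, so that the cited $L_2$-rate is applicable, and that the admissible window $\frac12<r'<r-\frac12$ is nonempty exactly under the hypothesis $r>1$. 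Balancing these two exponents, rather than any single estimate, is where the argument is delicate.
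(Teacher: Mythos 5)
Your proposal is correct and follows essentially the same route as the paper's own proof: lower bounds by monotonicity of the target norm combined with the Fang--Duan rates for $\W_p^r(\tord) \embed L_q(\tord)$, the linear upper bound via unit-ball inclusion for $p \geq 2$ and the Sobolev-type embedding $\W_p^r(\tord) \embed \W_2^{s}(\tord)$ with $s = r - (1/p - 1/2)$ for $1 < p < 2$, and the nonlinear upper bound via Proposition~\ref{prop:MCnonlin} with auxiliary space $\W_2^{s}(\tord)$, $1/2 < s < r - 1/2$, the lifting identification with $\W_p^{r-s}(\tord) \embed L_2(\tord)$, and the telescoping of exponents, exactly as in the paper. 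The only cosmetic refinement: when importing the lower bounds for $p > 2$ you should pick $q > \max(p,2)$ rather than an arbitrary $q \in (2,\infty)$, since the quoted Fang--Duan rate with main term $\bigl((\log n)^{d-1}/n\bigr)^{r-(\frac{1}{p}-\frac{1}{2})_+}$ is stated for $q$ above both $p$ and $2$.
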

\begin{proof} {\bf Estimate from below.\;}
	Note that~\mbox{$\|\cdot\|_{L_q(\tord)} \leq \|\cdot\|_{L_{\infty}(\tord)}$},
	hence $L_q$-approximation gives smaller errors
	than $L_{\infty}$-approximation.
	By this, the lower bounds follow from the asymptotic results for the embeddings
	\begin{equation*}
		\W_p^r(\tord)\embed L_q(\tord) \,, \qquad q>\max(p,2) \,,
	\end{equation*}
	that have been obtained by Fang and Duan
	for nonlinear Monte Carlo approximation~\cite{FD07},
	and for linear methods~\cite{FD08}, respectively.
	Note that the proofs for the lower bounds
	in the papers of Fang and Duan
	work for a larger range of the smoothness parameter~$r$
	than the corresponding upper bounds.
	
	{\bf Estimate from above.\;}
	This time we need the result in Proposition~\ref{prop:W2Linf}.
	If $p>2$,
	we use that~\mbox{$\|f\|_{\W_p^r(\tord)} \leq 1$}
	implies~\mbox{$\|f\|_{\W_2^r(\tord)} \leq 1$}
	and directly take the upper bound from Proposition~\ref{prop:W2Linf},
	compare the definition of the global error in~\eqref{eq:errglobal}.
	In the case $1<p<2$, we employ the Sobolev type embedding 
	\begin{equation} \label{eq:WpW2}
		\W_p^r(\tord) \embed \W_2^s(\tord) \,,
		\quad {\textstyle r \geq s + 1/p - 1/2} \,.
	\end{equation}
	For the linear setting,
	choosing~\mbox{$s := r - (1/p - 1/2)$},
	by \mbox{$r>1/p$} we guarantee \mbox{$s>1/2$},
	which is necessary for the embedding~\mbox{$\W_2^s(\tord) \embed L_{\infty}(\tord)$}
	to hold true and Proposition~\ref{prop:W2Linf} to be applicable.
	Then we obtain
	\begin{align*}
		e^{\ran,\lin}\left(n,\W_p^r(\tord) \embed L_{\infty}(\tord)\right)
			&\,\leq\, \|\W_p^r(\tord) \embed \W_2^{s}(\tord)\|
									\, e^{\ran,\lin}
											\left(n,\W_2^{s}(\tord)
																\embed L_{\infty}(\tord)
											\right)\\
			&\,\preceq\,
					\left(\frac{ (\log n)^{(d-1)}}{n}
					\right)^{r - (1/p - 1/2)}
							\sqrt{\log n}\,.
	\end{align*}
	In the nonlinear setting some improvement is possible.
	Regarding Definition~\ref{def:Wmix},
	we observe that the embedding~\eqref{eq:WpW2} is equivalent to the problem
	\begin{equation*}
		\W_p^{r-s}(\tord) \embed L_2(\tord) \,,
	\end{equation*}
	for which we have the estimate
	\begin{equation*}
		e^{\deter,\nonlin}\left(n,\W_p^{r-s}(\tord) \embed L_2(\tord)\right)
			\,\preceq\, \left(\frac{ (\log n)^{(d-1)}}{n} \right)^{r-s} \,,
			\quad {\textstyle r-s > \frac{1}{2}} \,,
	\end{equation*}
	see Fang and Duan~\cite[Theorem~2]{FD07} and the references therein.
	For any choice of the auxilieary parameter~$s$ such that \mbox{$1/2 < s < r - 1/2$},
	by Proposition~\ref{prop:MCnonlin} we obtain
	\begin{align*}
		e^{\ran,\nonlin}&\left(2 n,\W_p^r(\tord) \embed L_{\infty}(\tord)\right)\nonumber \\
			&\,\leq\, e^{\deter,\nonlin}\left(n,\W_p^r(\tord) \embed \W_2^{s}(\tord)\right)
								\, e^{\ran,\lin}\left(n,\W_2^{s}(\tord)
																	\embed L_{\infty}(\tord)
																\right) \\
			&\,\preceq\,
				\left(\frac{ (\log n)^{(d-1)}}{n} \right)^{r-s}
					\, \left(\frac{ (\log n)^{(d-1)}}{n} \right)^s \sqrt{\log n}\,,
	\end{align*}
	where $s$ cancels out.
	This is where we need the condition~$r > 1$.
	The shift from $n$ to $2n$ does not affect the rate of convergence,
	hence the proof is complete.
\end{proof}

\begin{remark}[On the range of the smoothness parameter~$r$]
	As mentioned within the above proof,
	the condition on the smoothness $r>1$ is not needed for the lower bounds
	of the nonlinear setting in Fang and Duan~\cite[Theorem~1]{FD07}.
	Actually, the exact asymptotic order via linear Monte Carlo methods
	for $L_q$-approximation in the case~\mbox{$2 \leq p < q < \infty$}
	is contained in their second paper~\cite[Theorem~1]{FD08},
	holding for~$r > 1/2$ already.
	Similarly, for~\mbox{$1 < p < q \leq 2$}
	we know results for nonlinear deterministic approximation that hold
	for smoothness~$r > 1/2$, see~\cite[Theorem~2]{FD07} and the references given therein.
	Hence, the conditions on the smoothness
	for the nonlinear Monte Carlo approximation rates
	in Fang and Duan~\cite[Theorem~1]{FD07}
	can be relaxed accordingly.
	Still, there is a gap, since the embedding
	\mbox{$\W_p^r(\tord) \embed L_q(\tord)$}, \mbox{$1 < p < q \leq \infty$},
	is compact (and therefore approximable) if~\mbox{$r > 1/p-1/q$}.
\end{remark}

\begin{remark}[Comparison with deterministic approximation]
	From \cite{VKN2} and Temlyakov~\cite{Tem93}
	we know that for~$1 < p \leq 2$ and~$r > 1$ we have
	\begin{equation}\label{eq-k1}
		e^{\deter}\left(n,\W^r_p(\tord) \embed L_{\infty}(\tord)\right)
			\,\asymp\,  \frac{ (\log n)^{(d-1)r}}{n^{r-1/2}}
									  \,.
	\end{equation}
	For \mbox{$2<p<\infty$}, \mbox{$r>1/p$},
	there is still a logarithmic gap in what is known about the asymptotic behaviour,
	in detail,
	\begin{equation}\label{eq-k2}
		\frac{ (\log n)^{(d-1)(r+1/2 - 1/p)}}{n^{r-1/p}} 
			\,\preceq\, e^{\deter}\left(n,\W^r_p(\tord) \embed L_{\infty}(\tord)\right) 
			\,\preceq\, \frac{ (\log n)^{(d-1)(r+1-2/p)}}{n^{r-1/p}}.
	\end{equation}
	The lower bound is obtained from the fact
	that $e^{\deter}$ is bounded from below by the so-called Weyl numbers,
	see~\cite{VKN1}.
	The upper bound is due to hyperbolic approximation.
	
	Comparing \eqref{eq-k1} and \eqref{eq-k2}
	with our result in Theorem~\ref{thm:main},
	we observe that randomization does help for all~$1 < p < \infty$,
	and nonlinear Monte Carlo methods are needed for the optimal rate
	in the case~$1 < p < 2$.
	The latter is particularly interesting
	since nonlinearity does only help when combined with randomization.

	Our results for~$L_{\infty}$-approximation
	fit to the general picture
	for $L_q$-approximation of the classes $\W_p^r(\tord)$,
	see Fang and Duan~\cite{FD07,FD08} for~$1 < q < \infty$,
	and also similar results for non-periodic
	isotropic spaces due to Heinrich~\cite{He92}.
	We have different (open) regions within the $(p,q)$-domain:
	\begin{description}
		\item[\quad$O:$\;]
			$e^{\ran,\nonlin}
				\asymp e^{\ran,\lin}
				\asymp e^{\deter,\nonlin}
				\asymp e^{\deter,\lin}$,\; linear deterministic methods suffice,
		\item[\quad$A:$\;]
			$e^{\ran,\nonlin}
				\asymp e^{\ran,\lin}
				\prec e^{\deter,\nonlin}
				\asymp e^{\deter,\lin}$,\;
					randomization helps,
		\item[\quad$B:$\;]
			$e^{\ran,\nonlin}
				\prec e^{\ran,\lin}
				\prec e^{\det,\nonlin}
				\asymp e^{\det,\lin}$,\; nonlinearity only helps with randomization,
		\item[\quad$C:$\;]
			$e^{\ran,\nonlin}
				\prec e^{\det,\nonlin}
				\prec e^{\ran,\lin}
				\asymp e^{\det,\lin}$,\; nonlinearity helps more than randomization,
		\item[\quad$D:$\;]
			$e^{\ran,\nonlin}
				\asymp e^{\deter,\nonlin}
				\prec e^{\ran,\lin}
				\asymp e^{\deter,\lin}$,\; nonlinearity helps.
	\end{description}
	Note that for $q = \infty$ (bold line in Fig.~1)
	there is a logarithmic gap between upper and lower Monte Carlo bounds,
	so then at the lower edge of~$A$ we only know
	\mbox{$e^{\ran,\nonlin} \preceq e^{\ran,\lin}$},
	though we expect asymptotic equality, compare Remark~\ref{rem:OP}.
	\begin{center}
	\begin{tikzpicture}[scale=2]
		
		\draw[->] (-0.3,0.0) -- (2.4,0.0) node[below] {$\frac{1}{p}$};
		\draw[->] (0.0,-.3) -- (0.0,2.4) node[left] {$\frac{1}{q}$};
		
		\draw (1.0,0.03) -- (1.0,-0.03) node [below] {$\frac{1}{2}$};
		\draw (0.03,1) -- (-0.03,1) node [left] {$\frac{1}{2}$};
		
		\node at (1.1,2.25) {$\W^r_p(\tord) \embed L_{q}(\tord)$};
		
		\draw (0,2) -- (2,2);
		\draw (1,1) -- (2,1);
		\draw (1,0) -- (1,1);
		\draw (1,1) -- (2,1);
		\draw (2,2) -- (2,0);
		\draw (0,0) -- (2,2);
		\draw (1,1) -- (2,0);
		\node at (0.67,0.33) {$A$};
		\node at (1.33,0.33) {$B$};
		\node at (1.67,0.67) {$C$};
		\node at (1.67,1.33) {$D$};
		\node at (0.67,1.33) {$O$}; 
		\draw (2,0.03) -- (2,-0.03) node [below] {$1$};
		\draw (0.03,2) -- (-0.03,2) node [left] {$1$};
		\draw (0,0) -- (2,0) [line width = 2.5pt];
		\node at (1,-0.5) {Fig.\ 1: 
			Regions of the $(p,q)$-domain with different
			behaviour of algorithmic features};
	\end{tikzpicture}
	\end{center}
\end{remark}

\begin{remark}[Open problems] \label{rem:OP}
	Fang and Duan~\cite{FD07,FD08} covered the cases
	\mbox{$1 < p,q < \infty$}. We added~$q = \infty$.
	For~\mbox{$p \in \{1,\infty\}$}
	the definition of the norm we use is critical
	since the Littlewood-Paley theorem does not hold.
	That means there is no consistency with derivatives bounded in~$L_p(\tord)$.
	For integer smoothness a useful definition of the limiting situations
	is provided by~\eqref{eq:derivnorm}.
	Approximation covering the case~\mbox{$p = 1$}
	is expected to need deterministic methods from sparse approximation,
	where the paper~\cite{DU17} on similar problems might give a hint.
	For~$p = \infty$ we expect deterministic methods
	to be optimal for any output space.
	
	In our results there is a gap of a factor~$\sqrt{\log n}$.
	We expect this to be a deficiency of the lower bound
	since this gap is closed for isotropic Sobolev spaces,
	which for~$d=1$ coincide with spaces of dominating mixed smoothness,
	see Heinrich~\cite{He92}.
\end{remark}

\begin{remark}[On the nonlinear algorithm for~$1 < p < 2$.]
	In the case of nonlinear algorithms,
	the deterministic method used for the first step
	of the two-stage method, approximating~\eqref{eq:WpW2},
	might need complete information on the most relevant Fourier coefficients,
	similar to the linear Monte Carlo method Algorithm~\ref{alg:MClin}.
	When implementing the two-stage method one should better spend
	a third of the information on obtaining the most relevant Fourier coefficients,
	the remaining two thirds of admissible information operations
	is then split equally into a collective deterministic approximation
	of subordered Fourier coefficients and a collective randomized refinement.
\end{remark}

\section*{Acknowledgements}

The authors thank the organizers of the conference
``IBC on the 70th anniversary of Henryk Woźniakowski''
where this project has been initiated.
G.B.~gratefully acknowledges support
by the German Research Foundation (DFG) Ul-403/2-1
and the Emmy-Noether programme, Ul-403/1-1.
R.J.K.~acknowledges support from the DFG Research Training Group 1523,
and the DFG-priority program 1324.


\renewcommand{\refname}{Bibliography}
\addcontentsline{toc}{section}{Bibliography}

\end{document}